\newtheorem{thm}{Theorem}[section]
\newtheorem{pro}[thm]{Proposition}
\newtheorem{cor}[thm]{Corollary}
\newtheorem{lem}[thm]{Lemma}
\newcommand{\mapsfrom}
{\mathrel{\reflectbox{\ensuremath{\mapsto}}}}
\newcommand{\noin}{\noindent}
\newcommand{\NN}{{\mathbb{N}}}
\newcommand{\QQ}{{\mathbb{Q}}}
\newcommand{\openbin}{\left( \!\! \begin{array}{c}}
\newcommand{\closebin}{\end{array} \!\! \right)}
\newcommand{\openvec}{\left[ \!\! \begin{array}{c}}
\newcommand{\closevec}{\end{array} \!\! \right]}
\newcommand{\openmat}{\left[ \!\! \begin{array}{cc}}
\newcommand{\closemat}{\end{array} \!\! \right]}
\newcommand{\opentri}{\left[ \!\! \begin{array}{ccc}}
\newcommand{\closetri}{\end{array} \!\! \right]}
\newcommand{\openquad}{\left[ \!\! \begin{array}{cccc}}
\newcommand{\closequad}{\end{array} \!\! \right]}
\newcommand{\openquin}{\left[ \!\! \begin{array}{ccccc}}
\newcommand{\closequin}{\end{array} \!\! \right]}
\newcommand{\opensex}{\left[ \!\! \begin{array}{cccccc}}
\newcommand{\closesex}{\end{array} \!\! \right]}
\newcommand{\End}{\mbox{\rm{End}}}
\newcommand{\cod}{\mbox{\rm{cod}}}
\newcommand{\dom}{\mbox{\rm{dom}}}
\newcommand{\id}{{\rm{id}}}
\newcommand{\sspan}{\mbox{\rm{span}}}
\newcommand{\hh}{\widehat}
\newcommand{\oo}{\overline}
\newcommand{\cA}{{\cal A}}
\newcommand{\cB}{{\cal B}}
\newcommand{\cC}{{\cal C}}
\newcommand{\cD}{{\cal D}}
\newcommand{\cG}{{\cal G}}
\newcommand{\cI}{{\cal I}}
\newcommand{\cL}{{\cal L}}
\newcommand{\cP}{{\cal P}}
\newcommand{\cR}{{\cal R}}
\newcommand{\cS}{{\cal S}}
\newcommand{\cX}{{\cal X}}
\begin{document}


\title{Some deformations of the fibred
biset category}

\author{\large Laurence
Barker\footnote{e-mail: barker@fen.bilkent.edu.tr,
Some of this work was done while this author was
on sabbatical leave, visiting the Department of
Mathematics at City, University of London.}
\hspace{1in} \.{I}smail Alperen
\"{O}\u{g}\"{u}t\footnote{e-mail: ismail.ogut@bilkent.edu.tr.} \\
\mbox{} \\
Department of Mathematics \\
Bilkent University \\
06800 Bilkent, Ankara, Turkey \\
\mbox{}}

\maketitle

\small
\begin{abstract}
\noin We prove the well-definedness of
some deformations of the fibred biset category
in characteristic zero. The method is to realize
the fibred biset category and the deformations
as the invariant parts of some categories whose
compositions are given by simpler formulas.
Those larger categories are constructed from
a partial category of subcharacters by
linearizing and introducing a cocycle.

\smallskip
\noin 2010 {\it Mathematics Subject Classification:}
Primary 19A22, Secondary 16B50.

\smallskip
\noin {\it Keywords:} partial category; linear
category; subgroup category; star product;
subcharacter

\end{abstract}

\section{Introduction}
\label{1}

One approach to finite group theory involves
linear categories whose objects are finite
groups. Examples include the biset category
studied in Bouc \cite{Bou10}, the fibred biset
category in Boltje--Co\c{s}kun \cite{BC18},
the $p$-permutation category in Ducellier
\cite{Duc16} and many subcategories of
those. The work behind the present paper has
been an attempt, in some cases successful,
to characterize such categories in terms of
categories that are larger but easier to
describe. For the biset category, the theme
was initiated in Boltje--Danz \cite{BD13} and
developed in \cite{BO}. Our presentation,
though, is self-contained and does not
presume familiarity with those two papers.

Throughout, we let $\cG$ be a non-empty set
of finite groups. It is always to be understood
that $F$, $G$, $H$, $I$ denote arbitrary
elements of $\cG$. We let $R$ be a commutative
unital ring such that every positive integer has
an inverse in $R$. The inversion condition,
expressed differently, is that the field
of rational numbers $\QQ$ embeds in $R$.
We let $K$ be an algebraically closed field
of characteristic zero. We let $A$ be a
multiplicatively written abelian group.

After reviewing some background in Section
\ref{2}, we shall introduce the notion of
an interior $R$-linear category $\cL$ with
set of objects $\cG$. Each $G$ acts on
the endomorphism algebra $\End_\cL(G)$
via an algebra map from the group algebra
$RG$. We shall construct a category
$\oo{\cL}$, called the invariant
category of $\cL$.

Informally, borrowing a term from
algebraic geometry, we call $\cL$
a ``polarization'' of $\oo{\cL}$. Let us
retain the scare-quotes, because we
do not propose a general definition, and
we wish only to use the term when the
composition for $\cL$ is easier to
describe than the composition for
$\oo{\cL}$. A ``polarization'' of the
biset category was introduced in
\cite{BD13}, and that was extended to some
deformations of the biset category in \cite{BO}.
In Section \ref{3}, as rather a toy illustration,
we shall introduce a ``polarization'' of a
$K$-linear category associated with
$K$-character rings. More substantially,
in Section \ref{4}, we shall introduce a partial
category called the $A$-subcharacter partial
category and, in Section \ref{5}, we shall
show that a twisted $R$-linearization of the
$A$-subcharacter partial category serves
as a ``polarization'' of the $R$-linear
$A$-fibred biset category discussed in
Boltje--Co\c{s}kun \cite{BC18}. One direction for
further study may be towards reassessing
the classification, in \cite{BC18}, of the simple
$A$-fibred biset functors. We shall comment
further on that at the end of the paper.

Also in Section \ref{5}, we shall present some
deformations of the $R$-linear $A$-fibred
biset category. To prove the associativity
of the deformed composition, we shall
make use of the fact that those
deformations, too, admit ``polarizations''
in the form of twisted $R$-linearizations
of the $A$-subcharacter partial category.

Our hypothesis on $R$ is not
significantly more general than the
case of an arbitrary field of characteristic
zero. Adaptations to other coefficient
rings would require further techniques.

\section{Interior linear categories}
\label{2}

Categories and partial categories arise in
our topic mainly as combinatorial structures
(in the sense that some familar ``up to''
qualifications are absent, to wit, all the
equivalences of categories below are
isomorphisms of categories). Let us
organize our notation and terminology
accordingly. The idea behind the less
standard among the following definitions
is not new. It goes back at least as far as
Schelp \cite{Sch72}. For clarity, let us
present the material in a self-contained
way. We define a {\bf partial magma} to
be a set $\cP$ equipped with a relation
$\sim$, called the {\bf matching relation},
together with a function $\cP \ni \phi \psi
\mapsfrom (\phi, \psi) \in \Gamma(\cP)$,
called the {\bf multiplication}, where
$\Gamma(\cP) = \{ (\phi, \psi) \in
\cP \times \cP : \phi \sim \psi \}$.

We call $\cP$ a {\bf partial semigroup}
provided the following associativity condition
holds: given $\theta, \phi, \psi \in \cP$ such
that $\theta \sim \phi$ and $\phi \sim \psi$,
then $\theta \sim \phi \psi$ if and only if
$\theta \phi \sim \psi$, in which case
$\theta (\phi \psi) = (\theta \phi) \psi$.
When $\theta \sim \phi \psi$, we say that
$\theta \phi \psi$ is defined.

Suppose $\cP$ is a partial semigroup.
An element $\iota \in \cP$ satisfying
$\iota \sim \iota$ and $\iota^2 = \iota$ is
called an {\bf idempotent} of $\cP$. Let
$\cX$ be a set and $\cI = (\id_X^\cP :
X \in \cX)$ a family of idempotents
$\id_X^\cP \in \cP$ satisfying the
following filtration condition: for all $\phi
\in \cP$, we have $\id_X^\cP \sim \phi
\sim \id_Y^\cP$ for unique $X, Y \in \cX$,
furthermore, $\id_X^\cP \phi = \phi =
\phi \iota_Y^\cP$. We write $\cod(\phi)
= X$ and $\dom(\phi) = Y$, which we
call the {\bf codomain} and {\bf domain}
of $\phi$, respectively. We call the triple
$(\cP, \cI, \cX)$ a {\bf small partial
category} on $\cX$. As an abuse of
notation, we often write $\cP$ instead
of $(\cP, \cI, \cX)$. We call an element
$\phi \in \cP$ a {\bf $\cP$-morphism}
$\cod(\phi) \leftarrow \dom(\phi)$, we
call an element $X \in \cX$ an
{\bf object} of $\cP$ and we call
$\id_X^\cP$ the {\bf identity
$\cP$-morphism} on $X$. We write
$$\cP(X, Y) = \{ \phi \in \cP :
  \cod(\phi) = X,  \, \dom(\phi) = Y \}$$
and $\End_\cP(X) = \cP(X, X)$. In the
context of partial categories, products are
called {\bf composites}. Observe that, given
$\cP$-morphisms $\phi$ and $\psi$ such
that $\phi \sim \psi$, then $\dom(\phi) =
\cod(\psi)$. If, conversely, $\phi \sim \psi$
for all $\cP$-morphisms $\phi$ and $\psi$
satisfying $\dom(\phi) = \cod(\psi)$, then
we call $\cP$ a {\bf small category}.
Of course, the latest definition coincides
with the usual definition of the same term;
a small category in the above sense
determines all the structural features of
a small category in the conventional sense,
and conversely.

Another approach to the above material is
as follows, directly generalizing the notion
of a small category expressed in Bourbaki
\cite[II Section 3 D\'{e}finition 2]{Bou16}.
A small partial category
$(\cP, \cI, \cX)$ uniquely determines a
quiver equipped with a composition
operation, where $\cX$, $\cP$,
$\dom()$, $\cod()$ are the vertex set,
arrow set, source function, target
function, respectively, and the
composition operation $\cP \leftarrow
\Gamma(\cP)$ satisfies evident
versions of the associativity and
identity axioms. The reason for our
treatment using semigroups rather
than quivers is that the former will
be more convenient when discussing
the algebras $R \cP$ and
$R_\gamma \cP$, defined below.
One special case worth bearing in
mind will be that where $\cP$ is a
group, whereupon $R_\gamma \cP$
is a twisted group algebra.

All the categories and partial categories
discussed below are deemed to be
small, and we shall omit the term
{\it small}, though the material
extends easily to locally small cases.

Given categories $\cC$ and $\cD$ on
a set $\cX$, then a functor $\lambda :
\cC \leftarrow \cD$ is said to be
{\bf object-identical} provided
$\lambda(X) = X$ for all $X \in \cX$. Note
that, if such $\lambda$ is an equivalence,
then $\lambda$ is an isomorphism.

Recall, a category is said to be
{\bf $R$-linear} when the morphism sets
are $R$-modules and the composition
maps are $R$-bilinear. Functors between
$R$-linear categories are required to
be $R$-linear on morphisms. We define
an {\bf interior $R$-linear category} on
$\cG$ to be an $R$-linear category
$\cL$ on $\cG$ equipped with a family
$(\sigma_G)$ of algebra maps
$$\sigma_G \: : \: \End_\cL(G)
  \leftarrow RG$$
called the {\bf structural maps} of $\cL$.
We write elements of $F {\times} G$ in
the form $f {\times} g$ instead of the
conventional $(f, g)$ (because the
unconventional notation is the more
readable when familiarity has been
acquired). We make $\cL(F, G)$ become
an $R(F {\times} G)$-module such that
$f {\times} g$ sends an element
$\phi \in \cL(F, G)$ to the element
$${}^{f \times g} \phi = \sigma_F(f) \,
  \phi \, \sigma_G(g)^{-1} \; .$$
We write ${}^f \phi {}^g = {}^{(f \times
g^{-1})} \phi$ and we also use the
notation ${}^f \phi = {}^f \phi {}^1$
and $\phi {}^g = {}^1 \phi {}^g$.

\begin{pro}
\label{2.1} Given an interior $R$-linear
category $\cL$ on $\cG$, then there is an
$R$-linear category $\oo{\cL}$ on $\cG$
such that, for all $F, G \in \cG$, the
$R$-module of $\oo{\cL}$-morphisms
$F \leftarrow G$ is the
$F {\times} G$-fixed $R$-submodule
$$\oo{\cL}(F, G) =
  \cL(F, G)^{F \times G}$$
and the composition for $\oo{\cL}$ is
restricted from the composition for $\cL$.
\end{pro}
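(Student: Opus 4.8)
The plan is to obtain $\oo{\cL}$ simply by restriction. Since each group $F \times G$ acts $R$-linearly on $\cL(F, G)$, the fixed set $\cL(F, G)^{F \times G}$ is automatically an $R$-submodule, and both associativity and $R$-bilinearity of the proposed composition will be inherited verbatim from $\cL$. So the whole substance of the argument reduces to two points: that the composite in $\cL$ of two invariant morphisms is again invariant (so that the composition really does restrict), and that $\oo{\cL}$ possesses identity morphisms lying inside the fixed submodules.

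First I would record a convenient reformulation of invariance. Since $\sigma_G$ is an algebra map, $\sigma_G(1)$ is the identity endomorphism of $G$ in $\cL$. Putting $g = 1$ and then $f = 1$ in the defining relation ${}^{f \times g} \phi = \sigma_F(f) \, \phi \, \sigma_G(g)^{-1}$ shows that $\phi \in \cL(F, G)^{F \times G}$ forces $\sigma_F(f) \phi = \phi$ and $\phi \, \sigma_G(g) = \phi$ for all $f \in F$ and $g \in G$; conversely these two one-sided relations multiply to recover full invariance. Granting this, the restriction of composition is immediate: if $\phi \in \oo{\cL}(F, G)$ and $\psi \in \oo{\cL}(G, H)$, then for $f \in F$ and $h \in H$ one has $\sigma_F(f) (\phi \psi) \sigma_H(h)^{-1} = (\sigma_F(f) \phi)(\psi \, \sigma_H(h)^{-1}) = \phi \psi$, using the left-invariance of $\phi$ and the right-invariance of $\psi$. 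Hence ${}^{f \times h}(\phi \psi) = \phi \psi$ and $\phi \psi \in \oo{\cL}(F, H)$. The point to notice is that the $G$-actions never enter this computation, which is precisely why the restriction is well-behaved.

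The main obstacle is that the identity endomorphism $\id_G^\cL$ of $\cL$ is not in general invariant, since $\sigma_G(g_1) \, \id_G^\cL \, \sigma_G(g_2)^{-1} = \sigma_G(g_1) \sigma_G(g_2)^{-1}$ need not equal $\id_G^\cL$; thus $\id_G^\cL$ cannot serve as the identity of $\oo{\cL}$, and a substitute must be manufactured. This is exactly where the hypothesis on $R$ is used. As $|G|$ is invertible in $R$, I would set $\epsilon_G = |G|^{-1} \sum_{g \in G} \sigma_G(g) \in \End_\cL(G)$. Because $\sigma_G$ is an algebra map, a reindexing of the sum gives $\sigma_G(g') \epsilon_G = \epsilon_G = \epsilon_G \, \sigma_G(g')$ for every $g' \in G$; in particular $\epsilon_G$ is idempotent and, by the reformulation above, lies in $\oo{\cL}(G, G)$.

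It then remains to check that $\epsilon_G$ is a two-sided identity of $\oo{\cL}$ at $G$. For invariant $\psi \in \oo{\cL}(G, H)$, left-invariance gives $\epsilon_G \psi = |G|^{-1} \sum_g \sigma_G(g) \psi = |G|^{-1} \sum_g \psi = \psi$, and for invariant $\phi \in \oo{\cL}(F, G)$, right-invariance gives $\phi \epsilon_G = |G|^{-1} \sum_g \phi \, \sigma_G(g) = \phi$. With these identity morphisms $\epsilon_G$ in place, and with associativity and bilinearity inherited from $\cL$, the data assemble into the required $R$-linear category $\oo{\cL}$ on $\cG$, completing the proof.
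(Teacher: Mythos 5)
Your proof is correct and follows essentially the same route as the paper: your $\epsilon_G = |G|^{-1}\sum_{g\in G}\sigma_G(g)$ is exactly the paper's $\sigma_G(e_G)$, the image of the averaging idempotent $e_G\in Z(RG)$, and the paper likewise derives the identity morphisms and the closure of composition from the identification $\oo{\cL}(F,G)=\sigma_F(e_F)\,\cL(F,G)\,\sigma_G(e_G)$. You merely spell out the one-sided invariance argument that the paper leaves implicit.
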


\begin{proof}
We define
${\displaystyle e_G = \frac{1}{|G|}
\sum_{g \in G} g }$
which is an idempotent $Z(RG)$. We have
$$\oo{\cL}(F, G) = \sigma_F(e_F) \,
  \cL(F, G) \, \sigma_G(e_G) \; .$$
So $\oo{\cL}$ is a category as specified,
with identity morphisms $\id_G^{\oo{\cL}}
= \sigma_G(e_G)$.
\end{proof}

We call $\oo{\cL}$ the {\bf invariant
category} of $\cL$. Note that $\oo{\cL}$
need not be a subcategory of $\cL$,
since $\id_G^{\oo{\cL}}$ may be distinct
from $\id_G^\cL$.

We define the {\bf $R$-linearization} of a partial
semigroup $\cP$ to be the algebra $R \cP$
over $R$ such that $R \cP$ is freely generated
over $R$ by $\cP$ and the multiplication on
$R \cP$ is given by $R$-linear extension of
the multiplication for $\cP$, with the
understanding that $\phi \psi = 0$ whenever
$\phi \not\sim \psi$. Let $R^\times$ denote
the unit group of $R$. We define a {\bf cocycle}
for $\cP$ over $R$ to be a function
$\gamma : R \leftarrow \cP \times \cP$
satisfying the following two conditions:

\smallskip
\noin {\bf Non-degeneracy:} Given
$\phi, \psi \in \cP$, then $\gamma(\phi, \psi)
\in R^\times$ if $\phi \sim \psi$, whereas
$\gamma(\phi, \psi) = 0$ if
$\phi \not\sim \psi$.

\noin {\bf Associativity:} Given
$\theta, \phi, \psi \in \cP$ with
$\theta \phi \psi$ defined, then
$\gamma(\theta, \phi) \gamma(\theta \phi,
\psi) = \gamma(\theta, \phi \psi)
\gamma(\phi, \psi)$.

\smallskip
\noin Fixing $\gamma$, let $R_\gamma \cP$
be the $R$-module freely generated by the
set of formal symbols $\{ p_\phi :
\phi \in \cP \}$. We make $R_\gamma \cP$
become an (associative, not necessarily
unital) algebra over $R$ by taking the
multiplication to be such that
$$p_\phi p_\psi =
  \gamma(\phi, \psi) p_{\phi \psi} \; .$$
We call $R_\gamma \cP$ the {\bf twisted
linearization} of $\cP$ with cocycle
$\gamma$. When $\gamma(\phi, \psi) = 1$
for all $(\phi, \psi) \in \Gamma(\cP)$, we call
$\gamma$ the {\bf trivial cocycle} for $\cP$.
In that case, we have an algebra isomorphism
$R_\gamma \cP \cong \cP$ given by
$p_\phi \leftrightarrow \phi$.

In later sections, we shall be
considering scenarios having the following
form. Suppose, now, that $\cP$ is a partial
category on $\cG$. Thus, we are supposing
that $\cP$ comes equipped with a family of
idempotents $(\id_G^\cP)$ satisfying the
filtration condition. It is easy to see that the
$R$-linearization $R \cP$ is an $R$-linear
category and $\id_G^{R \cP} = \id_G^\cP$.
Assume also that $R \cP$ is equipped
with the structure of an interior $R$-linear
category such that, for all $F, G \in \cG$, the
action of $F {\times} G$ on $R \cP(F, G)$
restricts to an action on $\cP(F, G)$. Define
$$\oo{\phi} = \sigma_F(e_F) \phi
  \sigma_G(e_G) = \frac{1}{|F| . |G|}
  \sum_{f \in F, g \in G}
  {}^f \! \phi {}^g$$
for $\phi \in \cP(F, G)$. Note that $\oo{\phi}
= \oo{{}^f \! \phi {}^g}$ and, if we let $\phi$
run over representatives of the
$F {\times} G$-orbits in $\cP(F, G)$, then
$\oo{\phi}$ runs over the elements of an
$R$-basis for $\oo{R \cP}(F, G)$. We have
$$\oo{\phi} \oo{\psi} = \sigma_F(e_F) \phi
  \sigma_G(e_G) \psi \sigma_H(e_H) =
  \frac{1}{|G|} \sum_{g \in G} \sigma_F(e_F)
  \phi . {}^g \psi \sigma_H(e_H) =
  \frac{1}{|G|} \sum_{g \in G}
  \oo{\phi \, . \, {}^g \psi}$$
for all $\phi \in \cP(F, G)$ and
$\psi \in \cP(G, H)$, the dot in the
formula inserted only for readability.
Similar comments hold for the twisted
linearizations. Let us make those comments,
because some modification is needed. Let
$\gamma$ be a cocycle for the partial
category $\cP$. To confirm that the twisted
$R$-linearization $R_\gamma \cP$ is an
$R$-linear category, observe that, writing
$\iota = \id_G^\cP$, then
$\gamma(\phi, \iota) =
\gamma(\iota, \iota)$, whence
$$\id_G^{R_\gamma \cP} =
  \gamma(\iota, \iota)^{-1} p_\iota \; .$$
Assume now that the structure of an interior
$R$-linear category is imposed on
$R_\gamma \cP$ instead of $R \cP$,
furthermore, for all $F, G \in \cG$, the
action of $F {\times} G$ on $R_\gamma
\cP(F, G)$ restricts to the action on
$\cP(F, G)$ and each
$\gamma(\phi^g, {}^g \psi) =
\gamma(\phi, \psi)$. Again, the elements
$$\oo{p}_\phi = \sigma_F(e_F) p_\phi
  \sigma_G(e_G) = \frac{1}{|F| . |G|}
  \sum_{f \in F, g \in G}
  {}^f \! (p_\phi) {}^g$$
comprise an $R$-basis for
$\oo{R_\gamma \cP}(F, G)$. A
manipulation similar to that for
$\oo{\phi} \oo{\psi}$ yields
$$\oo{p}_\phi \oo{p}_\psi =
  \frac{1}{|G|} \sum_{g \in G}
  \gamma(\phi, {}^g \psi) \,
  \oo{p}_{\phi . {}^g \psi} \; .$$

\section{The ordinary character category}
\label{3}

After Romero \cite[Section 4]{Rom12},
whose study was in the richer context
of Green biset functors, we shall describe
a $K$-linear category $K \cA_K$ associated with ordinary
$K$-character rings of finite groups. We
shall then realize $K \cA_K$ as the
invariant category $\oo{K \cR}$ of an
interior $K$-linear category $K \cR$.

For a finite group $E$, we write
$\cA_K(E)$ to denote the ring of
$K$-characters of $E$. That is to say,
$\cA_K(E)$ is the Grothendieck ring of
the category of finitely generated
$KE$-modules. Incidentally, the
multiplication on
$\cA_K(E)$ is given by tensor product
over $K$, but we shall not be making
use of that. Given a $KE$-module $M$,
we identify the isomorphism class of
$M$ with the $K$-character $\chi_K :
K \leftarrow E$ of $M$. Thus, $\cA_K(E)$
has a basis consisting of the irreducible
$K$-characters of $E$. The $K$-linear
extension $K \cA_K(E)$ can be
identified with the $K$-module of
class functions $K \leftarrow E$.

Any $KF$-$KG$-bimodule $X$ can
be regarded as a $K(F {\times} G)$-module
by writing $fxg^{-1} = (f {\times} g)x$ for
$f \in F$, $g \in G$, $x \in X$. In
particular, the isomorphism class of
$X$ can be identified with the
$K$-character $\chi_X : K \leftarrow
F {\times} G$.
We form the $K$-linear category
$K \cA_K$ with morphism $K$-modules
$K \cA_K(F, G) = K \cA_K(F {\times} G)$
and with composition $K \cA_K(F, H)
\leftarrow K \cA_K(F, G) \times
K \cA(G, H)$ such that, given a
$KF$-$KG$-bimodule $X$ and a
$KG$-$KH$-bimodule $Y$, writing
$Z = X \otimes_{KG} Y$, then the
composite is $\chi_X \chi_Y = \chi_Z$.
The next result, from Bouc
\cite[7.1.3]{Bou10}, describes the
composition more explicitly. Let
us give a quick alternative proof.

\begin{lem}
\label{3.1}
Let $\xi \in K \cA(F, G)$ and
$\eta \in K \cA(G, H)$. Let $f \in F$
and $h \in H$. Then
$$(\xi \eta)(f {\times} h) =
  \frac{1}{|G|} \sum_{g \in G}
  \xi(f {\times} g) \eta(g {\times} h) \; .$$
\end{lem}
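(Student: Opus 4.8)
The plan is to reduce the character-theoretic statement to a computation with tensor products of bimodules, since the composition in $K \cA_K$ was \emph{defined} via $\chi_X \chi_Y = \chi_Z$ where $Z = X \otimes_{KG} Y$. Because every class function in $K \cA_K(F,G)$ is a $K$-linear combination of characters of $KF$-$KG$-bimodules, and both sides of the claimed identity are $K$-bilinear in $(\xi, \eta)$, it suffices by bilinearity to verify the formula when $\xi = \chi_X$ and $\eta = \chi_Y$ are the characters of actual bimodules $X$ and $Y$. First I would set up this reduction explicitly, noting that the right-hand side is manifestly $K$-bilinear in $(\xi, \eta)$ and that the left-hand side, being evaluation of the composite, is bilinear as well by the $K$-linearity of the category.

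Next I would compute the character of $Z = X \otimes_{KG} Y$ as a $K(F {\times} H)$-module. The key tool is the standard trace formula for the character of a tensor product over a group algebra: viewing $X \otimes_{KG} Y$ as the image of $X \otimes_K Y$ under the averaging idempotent associated with the diagonal $G$-action, one gets $\chi_Z(f {\times} h)$ as the trace of the action of $f {\times} h$ on $X \otimes_{KG} Y$. Concretely, I would express $\chi_Z(f {\times} h)$ by first computing the trace on $X \otimes_K Y$ twisted by the projection $\frac{1}{|G|} \sum_{g \in G} g$ that cuts out the coinvariants, which yields $\frac{1}{|G|} \sum_{g \in G} \tr\bigl( (f {\times} g) \mid X \bigr) \, \tr\bigl( (g^{-1} {\times} h) \mid Y \bigr)$. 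Matching indices to the convention $\chi_X(f {\times} g) = \tr\bigl( x \mapsto f x g^{-1} \bigr)$ fixed in the excerpt then produces exactly $\xi(f {\times} g) \eta(g {\times} h)$ inside the sum.

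The main obstacle I anticipate is bookkeeping with the inverse conventions: the excerpt defines the bimodule-to-$K(F {\times} G)$-module structure by $f x g^{-1} = (f {\times} g) x$, so the character $\chi_X(f {\times} g)$ is the trace of $x \mapsto f x g^{-1}$, and I must track these inverses carefully through the tensor product so that the $g$ appearing in $\chi_X(f {\times} g)$ matches the $g$ in $\chi_Y(g {\times} h)$ rather than its inverse. The cleanest way to control this is to work with matrix coefficients: fix $K$-bases of $X$ and $Y$, write the traces as sums of diagonal entries, and observe that $X \otimes_{KG} Y$ is spanned by $x \otimes y$ modulo $xg \otimes y = x \otimes gy$, so the averaging over $g \in G$ arises naturally when computing the trace on the quotient. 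Once the inverse conventions are reconciled, the computation closes immediately and gives the stated formula with the factor $1/|G|$.

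Finally I would remark that the hypothesis that every positive integer is invertible (here in $K$, which has characteristic zero) is what makes the averaging idempotent $\frac{1}{|G|}\sum_{g} g$ available, so the formula is an honest identity of class functions rather than merely a congruence; this is the same mechanism underlying Proposition \ref{2.1}.
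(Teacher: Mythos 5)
Your proposal is correct and follows essentially the same route as the paper: reduce by $K$-bilinearity to $\xi = \chi_X$, $\eta = \chi_Y$, then compute the character of $X \otimes_{KG} Y$ by realizing it inside $X \otimes_K Y$ as the image of the averaging idempotent $e_G$ for the diagonal (middle) $G$-action, which is exactly the paper's identification $Z \cong \hh{Z}^G = e_G \hh{Z}$. The inverse-convention issue you flag does resolve the way you claim (the intermediate $(g^{-1} {\times} h)$ in your second paragraph is a transient slip: with the convention $fxg^{-1} = (f{\times}g)x$ the diagonal element acts as $(f{\times}g) \otimes (g{\times}h)$, so both factors carry $g$ and the summand is $\xi(f{\times}g)\,\eta(g{\times}h)$ with no re-indexing needed).
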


\begin{proof}
Let $X$, $Y$, $Z$ be as above. By
$K$-linearity, we may assume that
$\xi = \chi_X$ and $\eta = \chi_Y$.
Let $\zeta = \chi_Z$. Then
$\zeta(f {\times} h) =
(\xi \eta)(f {\times} h)$. Let $\hh{Z}
= X \otimes_K Y$ regarded as a
module of $K(F {\times} G {\times} H)$
such that
$$(f {\times} g {\times} h)(x \otimes_K y)
  = f x g^{-1} \otimes_K g y h^{-1}
  = (f {\times} g) x \otimes_K
  (g {\times} h) y$$
for $g \in G$, $x \in X$, $y \in Y$. Then
$\chi_{\hh{Z}}(f {\times} g {\times} h)
= \xi(f {\times} g) \eta(g {\times} h)$.
Let $F {\times} H$ and $G$ act on
$\hh{Z}$ via the canonical embeddings
in $F {\times} G {\times} H$. As a direct
sum of $K(F {\times} H)$-modules,
$\hh{Z} = \hh{Z}^G \oplus
\hh{Z}_{(G)}$, where $\hh{Z}^G$
denotes the $G$-fixed submodule and
$$\hh{Z}_{(G)} = \sspan_K \{ x g^{-1}
  \otimes_K gy - x \otimes_K y \} =
  \sspan_K \{ x g^{-1} \otimes_K y
  - x \otimes_K gy \} \; .$$
So the $G$-cofixed quotient
$\hh{Z}_G = \hh{Z} / \hh{Z}_{(G)}$
is a $K(F {\times} H)$-module and
$Z \cong \hh{Z}_G \cong
\hh{Z}^G = e_G \hh{Z}$.
Therefore, the trace of the action of
$f {\times} h$ on $Z$ is equal to the
trace of the action of $\sum_g
(f {\times} g {\times} h)/|G|$ on
$\hh{Z}$. That is to say, $\zeta(f
{\times} h) = \sum_g \chi_{\hh{Z}}(f
{\times} g {\times} h) / |G|$.
\end{proof}

Let $\cR$ be the partial category on $\cG$
such that $\cR(F, G) = F {\times} G$ and,
given $u {\times} v \in \cR(F {\times} G)$
and $v' {\times} w \in \cR(H {\times} G)$,
then $(u {\times} v) \sim (v' {\times} w)$
if and only if $v = v'$, furthermore,
$(u {\times} v)(v {\times} w) =
u {\times} w$. We make the
$K$-linearization $K \cR$ become an
interior $K$-linear category by defining
$$\sigma_G(g) = \sum_{v \in G}
  ({}^g v) {\times} v$$
where ${}^g v = g v g^{-1}$. Thus,
$F {\times} G$ acts on $\cR(F, G)$ by
${}^f \! (u {\times} v)^g = ({}^f \! u)
{\times} (v^g)$, where $v^g =
g^{-1} v g$. Let
$$\mu_{F, G} \: : \: \oo{K \cR}(F, G)
  \leftarrow K \cA_K(F, G)$$
be the $K$-linear map given by
$$\mu_{F, G}(\xi) = \frac{1}{|F|}
  \sum_{u \in F, v \in G} \xi(u {\times} v)
  \, \oo{u {\times} v} \; .$$

\begin{pro}
\label{3.2}
The maps $\mu_{F, G}$, for
$F, G \in \cG$, determine an
object-identical isomorphism of
$K$-linear categories $\mu : \oo{K \cR}
\leftarrow K \cA_K$.
\end{pro}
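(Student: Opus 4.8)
The plan is to establish three properties of the family $(\mu_{F,G})$: that each $\mu_{F,G}$ is an isomorphism of $K$-modules, that the family respects composition, and that it respects identities.

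First I would settle bijectivity by a dimension count together with an explicit diagonal description. Since $F {\times} G$ acts on $\cR(F, G) = F {\times} G$ by ${}^f (u {\times} v)^g = (fuf^{-1}) {\times} (g^{-1}vg)$, the orbit of $u {\times} v$ is the product of the $F$-conjugacy class of $u$ with the $G$-conjugacy class of $v$, and its stabiliser is $C_F(u) {\times} C_G(v)$. Thus the orbits are indexed by pairs of conjugacy classes, so their number is the product of the class numbers of $F$ and $G$, which equals the number of conjugacy classes of $F {\times} G$, that is, $\dim_K K\cA_K(F, G)$. As the $\oo{u {\times} v}$ running over orbit representatives form a $K$-basis of $\oo{K \cR}(F, G)$, the two spaces have equal dimension. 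A class function $\xi$ is constant on each orbit, so $\mu_{F, G}$ sends the indicator class function of the orbit of $u {\times} v$ to a scalar multiple $(|F|^{-1} \cdot |\text{orbit}|) \, \oo{u {\times} v}$; the scalar is invertible in $K$, so $\mu_{F, G}$ is diagonal with invertible entries relative to these bases, hence an isomorphism.

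Next, and this is the crux, I would verify compatibility with composition. Expanding $\mu_{F, G}(\xi) \, \mu_{G, H}(\eta)$ and applying the composition formula $\oo{\phi} \, \oo{\psi} = |G|^{-1} \sum_g \oo{\phi \cdot {}^g \psi}$ for $\oo{K \cR}$, the relation ${}^g(v' {\times} w) = (g v' g^{-1}) {\times} w$, and the rule that $(u {\times} v)({}^g(v' {\times} w))$ is defined in $\cR$ precisely when $v = g v' g^{-1}$ (yielding $u {\times} w$), I obtain a quadruple sum over $u, v', w, g$ with coefficient $\xi(u {\times} g v' g^{-1}) \eta(v' {\times} w)$ on $\oo{u {\times} w}$. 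Because $\xi$ is a class function on $F {\times} G$ and $u {\times} g v' g^{-1}$ is the $(1 {\times} g)$-conjugate of $u {\times} v'$, the factor $\xi(u {\times} g v' g^{-1})$ equals $\xi(u {\times} v')$, independent of $g$; summing over $g$ then produces a factor $|G|$ that cancels the normalisations, and the result coincides term-by-term with $\mu_{F, H}(\xi \eta)$ as computed from Lemma \ref{3.1}. The only delicacy here is the bookkeeping of the several sums over $G$ and the correct placement of the conjugation that lets the class-function property collapse the $g$-sum.

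Finally, I would dispose of identities without direct computation. Since each $\mu_{F, G}$ is bijective and the family respects composition, the morphism $\mu_{G, G}(\id_G^{K \cA_K})$ acts as a two-sided identity for composition on every $\oo{K \cR}(G, H)$ and $\oo{K \cR}(H, G)$; by uniqueness of identity morphisms it must equal $\id_G^{\oo{K \cR}} = \sigma_G(e_G)$. Together with object-identicality, which is immediate from the construction, this shows that $\mu$ is an object-identical isomorphism of $K$-linear categories.
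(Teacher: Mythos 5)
Your proof is correct and follows essentially the same route as the paper: both establish bijectivity by observing that $\mu_{F,G}$ is diagonal with invertible entries relative to the orbit-sum basis of $\oo{K\cR}(F,G)$ and the indicator class functions of conjugacy classes of $F{\times}G$, and both verify compatibility with composition by combining the general formula $\oo{\phi}\,\oo{\psi} = |G|^{-1}\sum_g \oo{\phi\cdot{}^g\psi}$ with Lemma \ref{3.1}. The only cosmetic difference is that the paper first evaluates $\oo{u{\times}v}\cdot\oo{v'{\times}w}$ in closed form as $\oo{u{\times}w}/|[v]_G|$ when $[v]_G=[v']_G$ (and $0$ otherwise), whereas you keep the sum over $g$ and collapse it using the class-function property of $\xi$; your explicit check that identities are preserved is a small addition the paper leaves implicit.
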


\begin{proof}
For $u {\times} v \in \cR(F {\times} G)$,
let $\xi_{\oo{u \times v}}$ be the element
of $K \cA_K(F, G)$ such that, given
$u_1 {\times} v_1 \in F {\times} G$, then
$\xi_{\oo{u \times v}}(u_1 {\times} v_1)
= 1$ when $u {\times} v$ and
$u_1 {\times} v_1$ are
$F {\times} G$-conjugate, otherwise
$\xi_{\oo{u {\times} v}}(u_1 {\times}
v_1) = 0$. Letting $u {\times} v$ run
over representatives of the conjugacy
classes of $F {\times} G$, then
$\oo{u {\times} v}$ runs over the
elements of a $K$-basis for
$\oo{K \cR}(F, G)$, while
$\xi_{\oo{u {\times} v}}$ runs over
the elements of a $K$-basis for
$K \cA_K(F, G)$. We have
$$|[u {\times} v]_{F \times G}| \,
  \oo{u {\times} v} = |F| \, \mu_{F \times G}
  (\xi_{\oo{u \times v}}) \; .$$
So $\mu_{F, G}$ is a $K$-isomorphism.

Now fix $u {\times} v \in \cR(F
{\times} G)$ and $v' {\times} w \in
\cR(G {\times} H)$. Write $[v]_G$
for the $G$-conjugacy class of $v$.
Substituting $\oo{\phi} =
\oo{u {\times} v}$ and $\oo{\psi}
= \oo{v' {\times} w}$, the general
formula for $\oo{\phi} \oo{\psi}$
in Section \ref{2} becomes
$$\oo{u {\times} v} .
  \oo{v' {\times} w} = \frac{1}{|G|}
  \sum_{g \in G} \oo{(u {\times} v)
  (({}^g v') {\times} w)} \; .$$
So $\oo{u {\times} v} . \oo{v'
{\times} w} = 0$ unless $[v]_G
= [v']_G$, in which case,
$\oo{u {\times} v} . \oo{v'
{\times} w} = \oo{u {\times} w} \,
|C_G(v)| / |G| = \oo{u {\times} w}
/ |[v]_G|$. Therefore,
$$\mu_{F, G}(\xi) \mu_{G, H}(\eta)
  = \frac{1}{|F|.|G|} \sum_{u \times
  v \in F \times G, v' \times w \in
  G \times H} \xi(u {\times} v)
  \eta(v' {\times} w) \, \oo{u {\times} v}
  . \oo{v' {\times} w}$$
$$= \frac{1}{|F|.|G|} \sum_{u \in F,
  v \in G, w \in H} \xi(u {\times} v)
  \eta(v {\times} w) \, \oo{u {\times} w}
  = \mu_{F, H}(\xi \eta)$$
for all $\xi$ and $\eta$ as in
Lemma \ref{3.1}.
\end{proof}

\section{The subcharacter partial category}
\label{4}

We shall introduce a category $\cS^A$ on
$\cG$, called the {\bf $A$-subcharacter
partial category} on $\cG$. We shall
construct a twisted $R$-linearization
$R_\ell \cS^A$ of $\cS^A$
parameterized by a multiplicative
monoid homomorphism $\ell : R^\times
\leftarrow \NN - \{ 0 \}$. After equipping
$R_\ell \cS^A$ with structural maps to
make $R_\ell \cS^A$ become an interior
$R$-linear algebra, we shall explicitly
describe the invariant category
$\oo{R_\ell \cS^A}$. That description
will be applied to deformations of the
$R$-linear $A$-fibred biset category
in the next section. Some of our
terminology and notation is adapted
from \cite{Bar04}, \cite{BO} and
Boltje--Co\c{s}kun \cite{BC18}, but
our account is self-contained.

To introduce some notation that we
shall be needing, let us review the
definition of the subgroup category
$\cS$ on $\cG$. (The category would
be written as $\cS_\cG$ in the notation
of \cite{BO}.) Consider the groups
$F, G, H, I \in \cG$. We let $\cS(F, G)$
denote the set of subgroups of
$F {\times} G$. Let $U \in \cS(F, G)$,
$V \in \cS(G, H)$, $W \in \cS(H, I)$.
We define
$$\Gamma(U, V) = \{ f {\times} g
  {\times} h : f {\times} g \in U,
  g {\times} h \in V \} \; .$$
After Bouc \cite[2.3.19]{Bou10}, we
define the {\bf star product} $U * V
\in \cS(F, H)$ to be
$$U * V = \{ f {\times} h : f {\times} g
  {\times} h \in \Gamma(U, V) \} \; .$$
Plainly, $*$ is associative. We point out
that, defining
$$\Gamma(U, V, W) = \{ f {\times} g
  {\times} h {\times} i : f {\times} g \in U,
  g {\times} h \in V, h {\times} i \in W \}$$
then $U * V * W = \{ f {\times} i :
f {\times} g {\times} h {\times} i \in
\Gamma(U, V, W) \}$. We make $\cS$
become a category by taking the
composition to be star product.

Below, when we have established the
construction of the partial category
$\cS^A$, it will be clear that $\cS^A$
coincides with $\cS$ when $A$ is
trivial. First, though, we need the
patience for a few definitions. As
in Boltje--Danz \cite{BD13}, we write
$p_2(U)$ and $p_1(V)$, respectively,
for the images of the projections of
$U$ and $V$ to $G$. We define
$k_2(U) = \{ g : 1 {\times} g \in U \}$
and $k_1(V) = \{ g : g {\times} 1
\in V \}$. Let
$$\Gamma_\cap(U, V) =
  k_1(U) \cap k_2(V) =
  \{ g \in G : 1 {\times} g {\times} 1
  \in \Gamma(U, V) \} \; .$$
The following lemma is part of
\cite[3.5]{BD13}.

\begin{lem}
\label{4.1} {\rm (Boltje--Danz.)}
With the notation above,
$$|\Gamma_\cap(U, V)| .
  |\Gamma_\cap(U * V, W)| =
  |\Gamma_\cap(U, V * W)| .
  |\Gamma_\cap(V, W)| \; .$$
\end{lem}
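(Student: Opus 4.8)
The plan is to introduce a single auxiliary set and count it in two ways. Specifically, I would work with
$$T = \{ g {\times} h \in G {\times} H : 1 {\times} g {\times} h {\times} 1 \in \Gamma(U, V, W) \} \; ,$$
which, unwinding the definition of $\Gamma(U, V, W)$, is the set of pairs $g {\times} h$ satisfying $1 {\times} g \in U$, $g {\times} h \in V$ and $h {\times} 1 \in W$. Each of the two projections of $T$, to the $G$-coordinate and to the $H$-coordinate, will have one of the $\Gamma_\cap$ factors as its image and constant fibres of size equal to another factor, so the two resulting formulas for $|T|$ will be precisely the two sides of the asserted identity.

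First I would project $T$ to the $G$-coordinate. The image is the set of $g$ with $1 {\times} g \in U$ for which there exists $h$ with $g {\times} h \in V$ and $h {\times} 1 \in W$; by the definition of the star product the existential condition is exactly $g {\times} 1 \in V * W$, so the image is $\Gamma_\cap(U, V * W)$. Fixing such a $g$ and choosing $h_0$ with $g {\times} h_0 \in V$ and $h_0 {\times} 1 \in W$, a short computation gives $\{ h : g {\times} h \in V \} = h_0 k_2(V)$, so the fibre over $g$ is $h_0 k_2(V) \cap k_1(W)$. Since $h_0 \in k_1(W)$, left translation by $h_0^{-1}$ carries this bijectively onto $k_2(V) \cap k_1(W) = \Gamma_\cap(V, W)$. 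Hence every fibre has size $|\Gamma_\cap(V, W)|$, and $|T| = |\Gamma_\cap(U, V * W)| \, |\Gamma_\cap(V, W)|$.

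Projecting instead to the $H$-coordinate and arguing symmetrically — now the image is $\Gamma_\cap(U * V, W)$, the set $\{ g : g {\times} h \in V \}$ is a right coset $k_1(V) g_0$ with $g_0 \in k_2(U)$, and right translation by $g_0^{-1}$ identifies the fibre with $k_2(U) \cap k_1(V) = \Gamma_\cap(U, V)$ — gives $|T| = |\Gamma_\cap(U, V)| \, |\Gamma_\cap(U * V, W)|$. Equating the two expressions for $|T|$ is the lemma. The one point requiring care, which I would isolate as a small observation, is the constancy of the fibre sizes: it rests entirely on the fact that a coset $h_0 k_2(V)$ meets the subgroup $k_1(W)$ in a set of the same cardinality as $k_2(V) \cap k_1(W)$, which holds precisely because the representative $h_0$ may be taken inside $k_1(W)$ (and dually for $g_0 \in k_2(U)$). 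Everything else is routine coset bookkeeping.
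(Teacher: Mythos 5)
Your argument is correct, but note that the paper itself offers no proof of this lemma to compare against: it is quoted verbatim from Boltje--Danz \cite[3.5]{BD13}, so any proof you give is necessarily your own. Your double count of the set $T = \{\, g \times h : 1 \times g \times h \times 1 \in \Gamma(U,V,W) \,\}$ checks out in every detail: the two projections do have images $\Gamma_\cap(U, V*W)$ and $\Gamma_\cap(U*V, W)$ respectively, the fibres are the intersections of a coset of $k_2(V)$ (resp.\ $k_1(V)$) with the subgroup $k_1(W)$ (resp.\ $k_2(U)$), and your observation that the coset representative can be chosen inside that subgroup is exactly what makes the fibre sizes constant and equal to $|\Gamma_\cap(V,W)|$ and $|\Gamma_\cap(U,V)|$. (You have silently used the characterization $\Gamma_\cap(U,V) = k_2(U) \cap k_1(V) = \{ g : 1 \times g \times 1 \in \Gamma(U,V) \}$, which is the one the rest of the paper relies on; the displayed formula $k_1(U) \cap k_2(V)$ in Section 4 is a typo.) Your method is, moreover, very much in the spirit of the paper's own proof of the neighbouring Lemma 4.2, which likewise counts a set of tuples $\Gamma$ in two ways via its projections; so your proof integrates naturally and makes the pair of counting lemmas self-contained rather than partly outsourced to \cite{BD13}.
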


\begin{lem}
\label{4.2}
With the notation above, $|U| . |V|
= |p_2(U) p_1(V)| .
|\Gamma_\cap(U, V)| . |U * V|$.
\end{lem}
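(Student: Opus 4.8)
The plan is to introduce the subgroup $\Gamma(U, V) \le F \times G \times H$ already defined in the text and to compute its order in two different ways, comparing the projection of $\Gamma(U,V)$ onto the middle factor $G$ with its projection onto $F \times H$. The lemma will then drop out by equating the two counts and feeding in the elementary factorizations of $|U|$ and $|V|$.

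First I would analyse the projection $\pi_G : \Gamma(U, V) \to G$ sending $f \times g \times h \mapsto g$. Its image is exactly $p_2(U) \cap p_1(V)$, since a middle coordinate $g$ occurs if and only if $g \in p_2(U)$ (so that some $f \times g$ lies in $U$) and $g \in p_1(V)$ (so that some $g \times h$ lies in $V$). For a fixed such $g$, the admissible $f$ form a coset of $k_1(U) = \{f : f \times 1 \in U\}$ and the admissible $h$ form a coset of $k_2(V) = \{h : 1 \times h \in V\}$, and these two choices are independent. Hence every fibre of $\pi_G$ has the common size $|k_1(U)| \cdot |k_2(V)|$, giving
$$|\Gamma(U,V)| = |p_2(U) \cap p_1(V)| \cdot |k_1(U)| \cdot |k_2(V)| \; .$$

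Next I would analyse the projection $\pi_{F \times H} : \Gamma(U,V) \to F \times H$ sending $f \times g \times h \mapsto f \times h$. By the definition of the star product its image is $U * V$, and its kernel is $\{1 \times g \times 1 \in \Gamma(U,V)\}$, which is precisely $\Gamma_\cap(U,V)$; thus $|\Gamma(U,V)| = |U * V| \cdot |\Gamma_\cap(U,V)|$. To assemble the result, I would use the projections of $U$ and of $V$ onto $G$ to write $|U| = |p_2(U)| \cdot |k_1(U)|$ and $|V| = |p_1(V)| \cdot |k_2(V)|$, together with the product-of-subgroups count $|p_2(U)| \cdot |p_1(V)| = |p_2(U) p_1(V)| \cdot |p_2(U) \cap p_1(V)|$ in $G$. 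Multiplying these and recognizing the factor $|p_2(U) \cap p_1(V)| \cdot |k_1(U)| \cdot |k_2(V)|$ as $|\Gamma(U,V)|$ from the first count gives $|U| \cdot |V| = |p_2(U) p_1(V)| \cdot |\Gamma(U,V)|$; substituting the second count for $|\Gamma(U,V)|$ then yields the claimed identity.

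The argument is entirely elementary order-counting, so there is no deep obstacle; the only points needing care are verifying that the fibres of $\pi_G$ genuinely all share the size $|k_1(U)| \cdot |k_2(V)|$ and correctly identifying the kernel of $\pi_{F \times H}$ with $\Gamma_\cap(U,V)$. The place where an error is most likely to creep in is simply keeping the roles of the two projections $p_1, p_2$ and the two kernels $k_1, k_2$ straight across the factors $F$, $G$, $H$.
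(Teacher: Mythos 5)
Your proposal is correct and follows essentially the same route as the paper: both count $|\Gamma(U,V)|$ in two ways, via the projection to the middle factor $G$ (image $p_2(U)\cap p_1(V)$, fibres of size $|k_1(U)|\cdot|k_2(V)|$) and via the projection to $F\times H$ (image $U*V$, fibres of size $|\Gamma_\cap(U,V)|$), then combine these with $|U|=|p_2(U)|\cdot|k_1(U)|$, $|V|=|p_1(V)|\cdot|k_2(V)|$ and the product formula for $|p_2(U)p_1(V)|$. The only cosmetic difference is that you phrase the second count in terms of the kernel of a group homomorphism where the paper uses a coset argument; the content is identical.
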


\begin{proof}
Let $\Gamma = \{ f {\times} g
{\times} h : f {\times} g \in U,
g {\times} h \in V \}$ and
$$\Lambda = p_2(U) \cap p_1(V)
  = \{ g : \big( \exists f {\times} h
  \in F {\times} H \big) \big(
  f {\times} g {\times} h \in
  \Gamma \big) \} \; .$$
Observe that $|\Lambda| = |p_2(U)|
. |p_1(V)| / |p_2(U) p_1(V)|$. Fix
$f {\times} g {\times} h \in \Gamma$.
Given $g' \in G$, then $f {\times} g'
{\times} h \in \Gamma$ if and only if
$g' g^{-1} \in \Gamma_\cap(U, V)$. So
$$|\Gamma| = |\Gamma_\cap(U, V)|
  . |U * V| \; .$$
Meanwhile, given
$f' {\times} h' \in F {\times} H$, then
$f' {\times} g {\times} h' \in \Gamma$
if and only if $f' f^{-1} \in k_1(U)$ and
$h' h^{-1} \in k_2(V)$. So $|\Gamma|
= |\Lambda| . |k_1(U)| . |k_2(V)|$. Since
$|U| = |p_2(U)| . |k_1(U)|$ and
$|V| = |p_1(V)| . |k_2(V)|$, we have
$$|\Gamma| = \frac{|\Lambda| .
  |U| . |V|}{|p_2(U)| . |p_1(V)|} =
  \frac{|U| . |V|}{|p_2(U) p_1(V)|} \; .$$
Eliminating $\Gamma$, we obtain
the required equality.
\end{proof}

For a finite group $E$, we define an
{\bf $A$-character} of $E$ to be a
homomorphism $A \leftarrow E$. We define
an {\bf $A$-subcharacter} to be a pair
$(T, \tau)$ consisting of a subgroup $T$
of $E$ and an $A$-character $\tau$ of
$E$. The set $\cS^A(E)$ of
$A$-subcharacters of $E$ becomes
an $E$-set via the conjugation actions
of $E$ on the two coordinates, that is,
given $g \in G$ and $t \in T$,
then ${}^g (T, \tau) = ({}^g T, {}^g \tau)$
where ${}^g \tau({}^g t) = \tau(t)$. When
$E$ is understood from the context, we
write $[T, \tau]$ to denote the $E$-orbit
of $(T, \tau)$. We write $\cS^A[E]$ to
denote the set of $E$-orbits in
$\cS^A(E)$.

Define $\cS^A(F, G) =
\cS^A(F {\times} G)$ and
$\cS^A[F, G] = \cS^A[F {\times} G]$. Let
$(U, \mu)$, $(V, \nu)$, $(W, \omega)$
be $A$-subcharacters in $\cS^A(F, G)$,
$\cS^A(G, H)$, $\cS^A(H, I)$,
respectively. We write $(U, \mu) \sim
(V, \nu)$ provided $\mu(1 {\times} g)
\nu(g {\times} 1) = 1$ for all
$g \in \Gamma_\cap(U, V)$. When
that condition holds, we define
$\mu * \nu$ to be the
$A$-character of $U * V$ given by
$(\mu * \nu)(f {\times} h) =
\mu(f {\times} g) \nu(g {\times} h)$
for $f {\times} g {\times} h \in
\Gamma(U, V)$.

\begin{pro}
\label{4.3}
Defining composition by
$(U, \mu) * (V, \nu) = (U * V, \mu * \nu)$
when $(U, \mu) \sim (V, \nu)$, then
$\cS^A$ becomes a partial category.
\end{pro}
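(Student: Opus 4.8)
The plan is to verify the two requirements in the definition of a partial category from Section~\ref{2}: first, that $\cS^A$ admits a family of identity idempotents satisfying the filtration condition, and second, that $\cS^A$ is a partial semigroup. For the identities I would take $\id_G^{\cS^A} = (\Delta(G), 1)$, where $\Delta(G) = \{ g {\times} g : g \in G \}$ is the diagonal and $1$ is the trivial $A$-character. Just as for the subgroup category $\cS$, one checks $\Delta(G) * \Delta(G) = \Delta(G)$ and $\Delta(F) * U = U = U * \Delta(G)$ for $U \in \cS(F, G)$, while on characters $1 * \mu = \mu = \mu * 1$; since $\Gamma_\cap(\Delta(F), U) = \{ 1 \}$, the matching $(\Delta(F), 1) \sim (U, \mu)$ always holds, and similarly on the right. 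The filtration condition (unique codomain and domain) is then inherited from $\cS$, so this part is routine. The substance of the proposition lies in the partial-semigroup associativity condition.

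Before addressing associativity, I would isolate the key point that $\mu * \nu$ is well-defined, that is, that $(\mu * \nu)(f {\times} h) = \mu(f {\times} g) \nu(g {\times} h)$ does not depend on the choice of $g$ with $f {\times} g {\times} h \in \Gamma(U, V)$. If $f {\times} g {\times} h$ and $f {\times} g' {\times} h$ both lie in $\Gamma(U, V)$, then $a = g' g^{-1}$ satisfies $1 {\times} a = (f {\times} g')(f {\times} g)^{-1} \in U$ and $a {\times} 1 = (g' {\times} h)(g {\times} h)^{-1} \in V$, so $a \in \Gamma_\cap(U, V)$. Writing $f {\times} g' = (1 {\times} a)(f {\times} g)$ and $g' {\times} h = (a {\times} 1)(g {\times} h)$ and using that $\mu, \nu$ are homomorphisms, the two candidate values differ by the factor $\mu(1 {\times} a) \nu(a {\times} 1)$, which is $1$ by the matching hypothesis $(U, \mu) \sim (V, \nu)$ together with the commutativity of $A$. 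The same commutativity shows that $\mu * \nu$ is itself a homomorphism, hence a genuine $A$-character of $U * V$.

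With well-definedness in hand, I would verify the associativity axiom for $\theta = (U, \mu)$, $\phi = (V, \nu)$, $\psi = (W, \omega)$ with $\theta \sim \phi$ and $\phi \sim \psi$. The asserted equality $\theta(\phi\psi) = (\theta\phi)\psi$ splits into the subgroup identity $U * (V * W) = (U * V) * W$, already noted in Section~\ref{4}, and the character identity $\mu * (\nu * \omega) = (\mu * \nu) * \omega$; evaluating both sides at $f {\times} i$ through a common witness $f {\times} g {\times} h {\times} i \in \Gamma(U, V, W)$ gives $\mu(f {\times} g) \nu(g {\times} h) \omega(h {\times} i)$ either way, the well-definedness guaranteeing that the intermediate choices are immaterial. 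The matching equivalence $\theta \sim \phi\psi \iff \theta\phi \sim \psi$ is the crux. I would show that each side is equivalent to the single symmetric condition
$$(\ast) \qquad \mu(1 {\times} g)\,\nu(g {\times} h)\,\omega(h {\times} 1) = 1 \quad \text{for all } 1 {\times} g {\times} h {\times} 1 \in \Gamma(U, V, W) \; .$$
Indeed, $\Gamma_\cap(U, V * W)$ is the projection to $g$ of the set of such pairs $(g, h)$, and $(\nu * \omega)(g {\times} 1) = \nu(g {\times} h) \omega(h {\times} 1)$ for any witness $h$; so $\theta \sim \phi\psi$ says exactly that $(\ast)$ holds for every admissible $g$, and by well-definedness the value is independent of $h$, giving $(\ast)$. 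Symmetrically, $\Gamma_\cap(U * V, W)$ is the projection to $h$ and $(\mu * \nu)(1 {\times} h) = \mu(1 {\times} g)\nu(g {\times} h)$, so $\theta\phi \sim \psi$ is also equivalent to $(\ast)$.

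The main obstacle is precisely this last equivalence: the two matching conditions quantify over different index sets --- the two coordinate projections of $\{ (g, h) : 1 {\times} g {\times} h {\times} 1 \in \Gamma(U, V, W) \}$ --- and bridging them requires both the reformulation $(\ast)$ and the witness-independence from the well-definedness step. It is worth noting that the counting identity of Lemma~\ref{4.1} is \emph{not} needed here; the only structural inputs are the associativity of $*$ on subgroups, the commutativity of $A$, and the well-definedness of the starred characters. Lemma~\ref{4.1} will instead enter when the cocycle for the twisted linearization is shown to be associative in the next section.
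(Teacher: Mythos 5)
Your proof is correct and follows essentially the same route as the paper: the paper's proof also reduces both matching conditions $\theta \sim \phi\psi$ and $\theta\phi \sim \psi$ to the single symmetric condition that $\mu(1 {\times} g)\nu(g {\times} h)\omega(h {\times} 1) = 1$ for all $1 {\times} g {\times} h {\times} 1 \in \Gamma(U, V, W)$, and likewise takes $(\Delta(G), 1)$ as the identity. You merely spell out some details the paper leaves as ``straightforward'' (the well-definedness of $\mu * \nu$ and the filtration condition), and your observation that Lemma~\ref{4.1} is not needed here matches the paper's usage.
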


\begin{proof}
We claim that the conditions

\smallskip
\noin $\bullet$ $(U, \mu) \sim
(V, \nu)$ and $(U * V, \mu * \nu) \sim
(W, \omega)$,

\noin $\bullet$ $(V, \nu) \sim
(W, \omega)$ and $(U, \mu) \sim
(V * W, \nu * \omega)$,

\smallskip
\noin are equivalent and, when they
hold, $(\mu * \nu) * \omega = \mu *
(\nu * \omega)$. It is straightforward
to confirm that the two conditions are
equivalent to:

\smallskip
\noin  $\bullet$ for all $g {\times} h \in
G {\times} H$ satisfying $1 {\times} g
{\times} h {\times} 1 \in \Gamma(U,
V, W)$, we have $\mu(1{\times} g)
\nu(g {\times} h) \omega(h {\times} 1) = 1$.

\smallskip
\noin Plainly, when the three equivalent
conditions hold, the expression $\mu *
\nu * \omega$ is unambiguous and
$$(\mu * \nu * \omega)(f {\times} i) =
  \mu(f {\times} g) \nu(g {\times} h)
  \omega(h {\times} i)$$
for all $f {\times} g {\times} h {\times} i
\in \Gamma(U, V, W)$. The claim is
established. To finish the proof, we
observe that $\id_G^{\cS^A} =
(\Delta(G), 1)$, where
$\Delta(G) = \{ y {\times} y : y \in G \}$
and $1$ denotes the trivial $A$-character.
\end{proof}

By Lemma \ref{4.1} there is a cocycle
$\gamma_\ell$ for $\cS^A$ given by
$$\gamma_\ell((U, \mu), (V, \nu)) =
  \ell(|\Gamma_\cap(U, V)|)$$
when $(U, \mu) \sim (V, \nu)$. Note,
the condition that $\gamma_\ell$ is
a cocycle implies that
$\gamma_\ell((U, \mu),
(V, \nu)) = 0$ when
$(U, \mu) \not\sim (V, \nu)$. We
define $R_\ell \cS^A =
R_{\gamma_\ell} \cS^A$. Thus,
$$R_\ell \cS^A(F, G) = \bigoplus_{(U, \mu)
  \in \cS^A(F, G)} R \, s_{U, \mu}^{F, G}$$
as a direct sum of regular $R$-modules,
where $s_{U, \mu}^{F, G}$ is a formal
symbol and
$$s_{U, \mu}^{F, G} s_{V, \nu}^{G, H}
  = \left\{ \begin{array}{ll}
  \ell(|\Gamma_\cap(U, V)|) \,
  s_{U * V, \mu * \nu}^{F, H} &
  \mbox{\rm{if $(U, \mu) \sim (V, \nu)$,}} \\
  0 & \mbox{\rm{otherwise.}}
  \end{array} \right.$$

Given $g \in G$, we define
$\Delta(G, g, G) = \{ {}^g y {\times} y :
y \in G \}$. Since
$\Delta(G, g, G) * \Delta(G, g', G) =
\Delta(G, g g', G)$ for $g' \in G$,
we have
$$s_{\Delta(G, g, G), 1}^{G, G}
  s_{\Delta(G, g', G), 1}^{G, G} =
  s_{\Delta(G, gg', G), 1}^{G, G} \; .$$
Since $\Delta(F, f, F) * U *
\Delta(G, g^{-1}, G) =
{}^{f \times g} U$ for $f \in F$, we have
$$s_{\Delta(F, f, F), 1}^{F, F}
  s_{U, \mu}^{F, G}
  s_{\Delta(G, g^{-1}, G), 1}^{G, G}
  = s_{{}^{f \times g} U,
  {}^{f \times g} \mu}^{F, G} =
  {}^{f \times g} s_{U, \mu}^{F, G} \; .$$
We make $R_\ell \cS^A$ become an
interior $R$-linear category such that
$\sigma_G(g) = s_{\Delta(G, g, G),
1}^{G, G}$. The calculations just above
confirm that $\sigma_G$ is an algebra
map and our notation is consistent.

In view of the comments we made in
Section \ref{2} concerning an $R$-basis
for $\oo{R \cP}(F, G)$, the element
$$\oo{s}_{U, \mu}^{F, G} = \sigma_F(e_F)
  s_{U, \mu}^{F, G} \sigma_G(e_G)$$
depends only on $F$, $G$ and the
$F {\times} G$-orbit $[U, \mu]$ of
$(U, \mu)$, furthermore,
$$\oo{R_\ell \cS^A}(F, G) =
  \bigoplus_{[U, \mu] \in \cS^A[F, G]}
  R \, \oo{s}_{U, \mu}^{F, G} \; .$$
To complete an explicit description of the
category $\oo{R_\ell \cS^A}$, we now
supply a formula for the composition.
By viewing $\cS^A(F, G)$ as an
$(F, G)$-biset, the
notation in the equation ${}^g (V, \nu) =
{}^{g \times 1} (V, \nu)$ makes sense for
any $g \in G$, similarly for the notation
${}^g V$ and ${}^g \nu$.

\begin{thm}
\label{4.4}
Let $F, G, H \in \cG$. Let $[U, \mu] \in
\cS^A[F, G]$ and $[V, \nu] \in
\cS^A[G, H]$. Then
$$(\oo{s}_{U, \mu}^{F, G} / |U|)
  (\oo{s}_{V, \nu}^{G, H} / |V|)
  = \frac{1}{|G|} \sum_g
  \frac{\ell(|\Gamma_\cap(U, {}^g V)|)}
  {|\Gamma_\cap(U, {}^g V)|}
  (\oo{s}_{U * {}^g V,
  \mu * {}^g \nu}^{F, H} / |U * {}^g V|)$$
where $g$ runs over representatives of
the double cosets $p_2(U) g p_1(V)
\subseteq G$ such that
$(U, \mu) \sim {}^g (V, \nu)$.
\end{thm}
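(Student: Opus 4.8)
The plan is to read the result off the general composition formula for invariant elements of a twisted linearization, derived at the end of Section \ref{2}, and then to regroup the resulting sum over $G$ into double cosets. First I would specialize that formula, taking $\phi = (U, \mu)$, $\psi = (V, \nu)$, $\gamma = \gamma_\ell$ and $p_\phi = s_{U, \mu}^{F, G}$, $p_\psi = s_{V, \nu}^{G, H}$. Since $\gamma_\ell((U, \mu), {}^g(V, \nu)) = \ell(|\Gamma_\cap(U, {}^g V)|)$ when $(U, \mu) \sim {}^g(V, \nu)$ and vanishes otherwise, and since $(U, \mu) * {}^g(V, \nu) = (U * {}^g V, \mu * {}^g \nu)$ by Proposition \ref{4.3}, this gives
$$\oo{s}_{U, \mu}^{F, G} \, \oo{s}_{V, \nu}^{G, H}
  = \frac{1}{|G|} \sum_g \ell(|\Gamma_\cap(U, {}^g V)|) \,
  \oo{s}_{U * {}^g V, \mu * {}^g \nu}^{F, H} \; ,$$
the sum running over all $g \in G$ with $(U, \mu) \sim {}^g(V, \nu)$. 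It then remains to divide by $|U| \, |V|$ and to collapse this sum to one over double coset representatives.

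Next I would show that each summand is constant on the double cosets $p_2(U) \backslash G / p_1(V)$. Writing $g' = p g q$ with $p \in p_2(U)$ and $q \in p_1(V)$, I would choose $f_0 \in F$ and $h_0 \in H$ with $f_0 \times p \in U$ and $q \times h_0 \in V$. Using that $k_2(U)$ is normal in $p_2(U)$ and $k_1(V)$ is normal in $p_1(V)$, a short conjugation computation gives $\Gamma_\cap(U, {}^{g'} V) = {}^p \Gamma_\cap(U, {}^g V)$, so both $|\Gamma_\cap(U, {}^g V)|$ and the matching condition $(U, \mu) \sim {}^g(V, \nu)$ depend only on the double coset. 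The same bookkeeping, absorbing the factor $p$ into conjugation by $f_0 \times p \in U$ on the $F$-side and the factor $q$ into conjugation by $q \times h_0 \in V$ on the $H$-side, shows that $(U * {}^{g'} V, \mu * {}^{g'} \nu)$ is $F \times H$-conjugate to $(U * {}^g V, \mu * {}^g \nu)$; hence $\oo{s}_{U * {}^g V, \mu * {}^g \nu}^{F, H}$, which depends only on the $F \times H$-orbit, is also constant on the double coset.

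Finally I would do the counting. A double coset $D = p_2(U) g p_1(V)$ then contributes $|D|$ equal summands to the displayed sum, so choosing one representative $g$ per coset I can replace $\sum_g$ by $\sum_D |D|$. Dividing by $|U| \, |V|$, the normalization works out exactly: applying Lemma \ref{4.2} to $U$ and ${}^g V$, and using $|{}^g V| = |V|$ with $p_1({}^g V) = {}^g p_1(V)$, gives
$$|U| \, |V| = |p_2(U) \, {}^g p_1(V)| \; |\Gamma_\cap(U, {}^g V)| \; |U * {}^g V| \; .$$
Translating by $g$ on the right shows $|p_2(U) \, {}^g p_1(V)| = |p_2(U) \, g \, p_1(V)| = |D|$, so this reads $|U| \, |V| = |D| \, |\Gamma_\cap(U, {}^g V)| \, |U * {}^g V|$, whence $|D| / (|U| \, |V|) = 1 / (|\Gamma_\cap(U, {}^g V)| \, |U * {}^g V|)$. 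Substituting turns the coefficient $|D| \, \ell(|\Gamma_\cap(U, {}^g V)|) / (|U| \, |V|)$ into $\ell(|\Gamma_\cap(U, {}^g V)|) / (|\Gamma_\cap(U, {}^g V)| \, |U * {}^g V|)$ and reproduces the claimed formula.

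The routine parts are the first and last paragraphs; the hard part will be the double-coset invariance of the second, in particular verifying that the $A$-character $\mu * {}^g \nu$, and not merely the subgroup $U * {}^g V$, transforms correctly under the absorbing conjugations, and that the matching relation is genuinely constant along each coset. The arithmetic reconciling the normalizations is then immediate from Lemma \ref{4.2}.
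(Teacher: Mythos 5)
Your proposal is correct and follows essentially the same route as the paper's proof: specialize the general formula for $\oo{p}_\phi \oo{p}_\psi$ from Section \ref{2}, observe that the summands are constant on the double cosets $p_2(U) g p_1(V)$, and convert the factor $|p_2(U) g p_1(V)| = |p_2(U) p_1({}^g V)|$ into the stated normalization via Lemma \ref{4.2}. The only difference is that you spell out the double-coset invariance (of $\Gamma_\cap$, the matching relation, and the $F {\times} H$-orbit of $(U * {}^g V, \mu * {}^g \nu)$) in more detail than the paper, which simply asserts it.
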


\begin{proof}
By the last line of Section \ref{2},
$$\oo{s}_{U, \mu}^{F, G}
  \oo{s}_{V, \nu}^{G, H} = \frac{1}{|G|}
  \sum_y \gamma(y) \oo{s}_{U * {}^y V,
  \mu * {}^y \nu}^{F, H}$$
where $\gamma(y) =
\ell(|\Gamma_\cap(U, {}^y V)|)$ and
$y$ runs over those elements of $G$
such that $(U, \mu) \sim {}^y (V, \nu)$.
We have $(U, \mu) \sim {}^{y'} (V, \nu)$
and $\gamma(y) = \gamma(y')$ for all
$y' \in p_2(U) y p_1(V)$. So
$$\oo{s}_{U, \mu}^{F, G}
  \oo{s}_{V, \nu}^{G, H} = \frac{1}{|G|}
  \sum_g |p_2(U) g p_1(V)| \gamma(g)
  \oo{s}_{U * {}^g V,
  \mu * {}^g \nu}^{F, H} \; .$$
Since $|p_2(U) g p_1(V)|
= |p_2(U) p_1({}^g V)|$ and
$|{}^g V| = |V|$, Lemma \ref{4.2} yields
the required equality.
\end{proof}

\section{The fibred biset category}
\label{5}

We shall review the notion of the
{\bf $R$-linear $A$-fibred biset category}
$R \cB^A$ on $\cG$. Then we shall
introduce, more generally, an $R$-linear
category $R_\ell \cB^A$ on $\cG$.
To confirm the associativity of the
composition for $R_\ell \cB^A$, we
shall apply Theorem \ref{4.4}.

A discussion about $R \cB^A$, including
an interpretation as the $R$-linear
extension of a Grothendieck ring, can
be found in Boltje--Co\c{s}kun
\cite[Sections 1, 2]{BC18}.
We shall work with the following
characterization of $R \cB^A$.
The morphism $R$-modules are
$$R \cB^A(F, G) = \bigoplus_{[U, \mu]
  \in \cS^A[F, G]} R [(F {\times} G) /
  (U, \mu)]$$
where, for our purposes, we can regard
$[(F {\times} G) / (U, \mu)]$ as a formal
symbol uniquely determined by
the $F {\times} G$-orbit $[U, \mu]$.
See \cite[Section 1]{BC18}  for an
interpretation, not needed below, of
$[(F \times G) / (U, \mu)]$ as the
isomorphism class of an $A$-fibred
biset $(F {\times} G)/(U, \mu)$. The
composition for $R \cB^A$ is given by
$$[(F {\times} G) / (U, \mu)] .
  [(G {\times} H) / (V, \nu)] = \sum_g
  [(F {\times} H) / (U * {}^g V,
  \mu * {}^g \nu)]$$
where $g$ runs as in Theorem \ref{4.4}.
It is easy to check that the right-hand
side of the formula is well-defined,
independently of the choices of double
coset representatives $g$ and orbit
representatives $(U, \mu)$ and $(V, \nu)$.
The associativity of the composition
follows from \cite[2.2, 2.5]{BC18} or,
alternatively, Theorem \ref{5.1} below.
The identity $R \cB^A$-morphism on
$G$ is $[(G {\times} G)/(\Delta(G), 1)]$.

Generalizing, we define
$$R_\ell \cB^A(F, G) =
  \bigoplus_{[U, \mu] \in \cS^A[F, G]}
  R \, d_{U, \mu}^{F, G}$$
where $d_{U, \mu}^{F, G}$ is a formal
symbol uniquely determined by $F$,
$G$ and $[U, \mu]$. We make
$R_\ell \cB^A$ become an $R$-linear
category on $\cG$ by defining the
composition to be such that
$$d_{U, \mu}^{F, G} .
  d_{V, \nu}^{G, H} = \sum_g
  \frac{\ell(|\Gamma_\cap(U, {}^g V)|)}
  {|\Gamma_\cap(U, {}^g V)|} \,
  d_{U * {}^g V, \mu * {}^g \nu}^{F, H}$$
again with $g$ running as in Theorem
\ref{4.4}. In a moment, to confirm that
$R_\ell \cB^A$ is an $R$-linear category,
we shall make use of the ``polarization''
$R_\ell \cS^A$. We let
$$\nu_{F, G} \: : \: \oo{R_\ell \cS^A}(F, G)
  \leftarrow R_\ell \cB^A(F, G)$$
be the $R$-linear map given by
$\nu_{F, G}(d_{U, \mu}^{F, G}) =
|G| \, \oo{s}_{U, \mu}^{F, G} \, / \, |U|$.
The elements $d_{U, \mu}^{F, G}$
comprise an $R$-basis for
$R_\ell \cB^A(F, G)$, while the elements
$\oo{s}_{U, \mu}^{F, G}$ comprise an
$R$-basis for $\oo{R_\ell \cS^A}(F, G)$,
so $\nu_{F, G}$ is an $R$-isomorphism.

\begin{thm}
\label{5.1}
The composition for $R_\ell \cB^A$ is
associative and $R _\ell \cB^A$ is an
$R$-linear category on $\cG$. The
maps $\nu_{F, G}$, for $F, G \in \cG$,
determine an object-identical isomorphism
of $R$-linear categories $\nu :
\oo{R_\ell \cS^A} \leftarrow R_\ell \cB^A$.
\end{thm}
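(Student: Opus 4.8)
The plan is to obtain both assertions at once by transport of structure across $\nu$. The category $\oo{R_\ell \cS^A}$ is already a genuine $R$-linear category by Proposition \ref{2.1}, and its composition is made fully explicit in Theorem \ref{4.4}. So it suffices to show that each $\nu_{F, G}$ is an $R$-linear isomorphism and that the family $(\nu_{F, G})$ intertwines the composition defined on $R_\ell \cB^A$ with the genuine composition on $\oo{R_\ell \cS^A}$. Granted that, the associativity of the former composition, the well-definedness of its defining formula, and the unit axiom for $R_\ell \cB^A$ are all inherited from the corresponding properties of $\oo{R_\ell \cS^A}$, and $\nu$ becomes an object-identical isomorphism.

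That each $\nu_{F, G}$ is an $R$-isomorphism has already been noted: the elements $d_{U, \mu}^{F, G}$ and $\oo{s}_{U, \mu}^{F, G}$ run over $R$-bases indexed by the same orbit set $\cS^A[F, G]$, and $\nu_{F, G}$ carries each basis vector to a unit multiple of the other. For the intertwining it is enough to check, on basis elements, that $\nu_{F, H}(d_{U, \mu}^{F, G} . d_{V, \nu}^{G, H}) = \nu_{F, G}(d_{U, \mu}^{F, G}) . \nu_{G, H}(d_{V, \nu}^{G, H})$. On the right-hand side, inserting the definition of $\nu$ gives $\frac{|G| . |H|}{|U| . |V|} \, \oo{s}_{U, \mu}^{F, G} . \oo{s}_{V, \nu}^{G, H}$; expanding the product by Theorem \ref{4.4} cancels the factors $|U|, |V|$ and $|G|$ and leaves $|H| \sum_g \frac{\ell(|\Gamma_\cap(U, {}^g V)|)}{|\Gamma_\cap(U, {}^g V)| . |U * {}^g V|} \, \oo{s}_{U * {}^g V, \mu * {}^g \nu}^{F, H}$, with $g$ running as in Theorem \ref{4.4}. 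On the left-hand side, applying $\nu_{F, H}$ termwise to the defining formula for $d_{U, \mu}^{F, G} . d_{V, \nu}^{G, H}$ produces exactly the same expression. Thus $\nu$ preserves composites, and the normalizing factor $|G| / |U|$ built into $\nu_{F, G}$ is seen to be precisely what makes the two formulas agree.

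It remains to match identities. Since $\Gamma_\cap(\Delta(G), \Delta(G)) = \{ 1 \}$ and $\ell(1) = 1$, the general formula $\id_G^{R_\gamma \cP} = \gamma(\iota, \iota)^{-1} p_\iota$ of Section \ref{2} shows that $s_{\Delta(G), 1}^{G, G}$ is the identity of $\End_{R_\ell \cS^A}(G)$; hence $\oo{s}_{\Delta(G), 1}^{G, G} = \sigma_G(e_G) \, s_{\Delta(G), 1}^{G, G} \, \sigma_G(e_G) = \sigma_G(e_G)^2 = \sigma_G(e_G) = \id_G^{\oo{R_\ell \cS^A}}$. As $|\Delta(G)| = |G|$, this gives $\nu_{G, G}(d_{\Delta(G), 1}^{G, G}) = \id_G^{\oo{R_\ell \cS^A}}$, so $d_{\Delta(G), 1}^{G, G}$ is forced to be the identity $R_\ell \cB^A$-morphism on $G$. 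Combining this with the intertwining, $\nu$ transports the associativity and unit axioms of $\oo{R_\ell \cS^A}$ to $R_\ell \cB^A$, proving that $R_\ell \cB^A$ is an $R$-linear category on $\cG$ and that $\nu$ is an object-identical isomorphism. The only delicate point is the scalar bookkeeping in the intertwining step; all the hard content behind associativity is already packaged into Theorem \ref{4.4} (and, beneath it, Lemmas \ref{4.1} and \ref{4.2}), so no direct associativity computation is needed here.
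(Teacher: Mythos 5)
Your proposal is correct and follows essentially the same route as the paper's proof: establish the intertwining identity $\nu_{F,G}(d_{U,\mu}^{F,G}) \, . \, \nu_{G,H}(d_{V,\nu}^{G,H}) = \nu_{F,H}(d_{U,\mu}^{F,G} \, . \, d_{V,\nu}^{G,H})$ from Theorem \ref{4.4} and transport associativity and the unit across the bijections $\nu_{F,G}$. You merely spell out the scalar bookkeeping and the identity-morphism check that the paper leaves implicit.
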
 

\begin{proof}
Theorem \ref{4.4} implies that
$\nu_{F, G}(d_{U, \mu}^{F, G}) .
\nu_{G, H}(d_{V, \nu}^{G, H}) =
\nu_{F, H}(d_{U, \mu}^{F, G} .
d_{V, \nu}^{G, H})$.
By $R$-linearity, the composition
is associative. The identity
$R_\ell \cB^A$-morphism on $G$
is $d_{\Delta(G), 1}^{G, G}$.
\end{proof}

We have the following immediate
corollary, realizing $R \cB^A$ as the
invariant category not of $R \cS^A$
but of a deformation of $R \cS^A$.

\begin{cor}
\label{5.2}
Suppose $\ell(n) = n$ for all positive
integers $n$. Then there is an
object-identical isomorphism of
$R$-linear categories
$\oo{R_\ell \cS^A} \cong R \cB^A$
given by $|G| \oo{s}_{U, \mu}^{F, G}
\leftrightarrow |U| [(F {\times} G)
/ (U, \mu)]$.
\end{cor}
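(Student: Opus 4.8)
The plan is to exploit the fact that the hypothesis $\ell(n) = n$ trivializes the deformation scalar in the composition for $R_\ell \cB^A$, reducing that category to $R \cB^A$ outright, after which the desired isomorphism is read off directly from Theorem \ref{5.1}.

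First I would note that, under the assumption $\ell(n) = n$, the coefficient $\ell(|\Gamma_\cap(U, {}^g V)|) / |\Gamma_\cap(U, {}^g V)|$ occurring in the composition formula for $R_\ell \cB^A$ equals $1$ for every admissible double-coset representative $g$. Hence the composition $d_{U, \mu}^{F, G} . d_{V, \nu}^{G, H} = \sum_g d_{U * {}^g V, \mu * {}^g \nu}^{F, H}$ agrees termwise with the composition defining $R \cB^A$. The $R$-linear bijection sending each basis element $d_{U, \mu}^{F, G}$ to $[(F {\times} G)/(U, \mu)]$ is therefore an object-identical isomorphism $R_\ell \cB^A \cong R \cB^A$; the identity morphisms match, since $d_{\Delta(G), 1}^{G, G}$ corresponds to $[(G {\times} G)/(\Delta(G), 1)]$.

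Next I would invoke Theorem \ref{5.1}, which furnishes the object-identical isomorphism $\nu : \oo{R_\ell \cS^A} \leftarrow R_\ell \cB^A$ with $\nu_{F, G}(d_{U, \mu}^{F, G}) = |G| \, \oo{s}_{U, \mu}^{F, G} / |U|$. Composing $\nu$ with the identification of the previous step carries $[(F {\times} G)/(U, \mu)]$ to $|G| \, \oo{s}_{U, \mu}^{F, G} / |U|$. Clearing the denominator gives precisely the stated correspondence $|U| [(F {\times} G)/(U, \mu)] \leftrightarrow |G| \, \oo{s}_{U, \mu}^{F, G}$, and since a composite of two object-identical isomorphisms of $R$-linear categories is again one, the corollary follows.

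The main obstacle is essentially nonexistent: the entire content lies in the single observation that $\ell(n) = n$ forces the ratio $\ell(|\Gamma_\cap|)/|\Gamma_\cap|$ to equal $1$. The only place demanding a moment's care is the bookkeeping of the factors $|U|$ and $|G|$, to verify that the composite correspondence reproduces verbatim the one asserted in the statement rather than differing by some stray scalar.
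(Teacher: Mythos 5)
Your proposal is correct and follows exactly the route the paper intends: Corollary \ref{5.2} is stated as an immediate consequence of Theorem \ref{5.1}, the whole content being that $\ell(n)=n$ makes the ratio $\ell(|\Gamma_\cap(U,{}^gV)|)/|\Gamma_\cap(U,{}^gV)|$ equal to $1$, so that $d_{U,\mu}^{F,G}\mapsto[(F{\times}G)/(U,\mu)]$ identifies $R_\ell\cB^A$ with $R\cB^A$, and composing with $\nu$ gives the stated correspondence. Your scalar bookkeeping ($|U|$ versus $|G|$) is also right.
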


In \cite{BO}, it is shown that
$K_\ell \cS$ is locally semisimple
when $\ell$ satisfies the following
non-degeneracy condition: as
$q$ runs over the prime numbers,
the values $\ell(q)$ are algebraically
independent over the minimal
subfield $\QQ$ of $K$. At the time
of writing, we do not know whether
the same conclusion holds for
$K_\ell \cS^A$ under the same
non-degeneracy condition. An
approach to directly adapting the
argument in \cite{BO} would be to
make use of a suitable analogue of
\cite[3.7]{BC18}. More speculatively,
if such a generic semisimplicity
result does hold, then it might have
a bearing on the problem of
classifying the simple
$K_\ell \cS^A$-modules and, from
there, via Theorem \ref{5.1}, the
problem of classifying the simple
$K_\ell \cB^A$-modules.

\end{document}